\theoremstyle{plain}
\newtheorem{thm}{Theorem}[section]
\newtheorem{lem}[thm]{Lemma}
\newtheorem{pro}[thm]{Proposition}
\theoremstyle{definition}
\newtheorem{defn}[thm]{Definition}
\theoremstyle{remark}
\newcommand{\Gtwo}{\ifmmode{{\rm G}_2}\else{${\rm G}_2$}\fi}
\date{\today}
\begin{document}

\title[Non-existence of flat paracontact metric structures...]{Non-existence of flat paracontact metric structures in dimension greater than or \\
equal to five}

\author{Simeon Zamkovoy}
\address[Zamkovoy]{University of Sofia "St. Kl. Ohridski"\\
Faculty of Mathematics and Informatics,\\
Blvd. James Bourchier 5,\\
1164 Sofia, Bulgaria} \email{zamkovoy@fmi.uni-sofia.bg}

\author{Vassil Tzanov}

\begin{abstract}
{An example of a three dimensional flat paracontact metric
manifold with respect to Levi-Civita connection is constructed. It
is shown that no such manifold exists for odd dimensions greater
than or equal to five.}

MSC: 53C15, 5350, 53C25, 53C26, 53B30
\end{abstract}

\maketitle \setcounter{tocdepth}{3} \tableofcontents

\section{Introduction}

The almost paracontact structure on pseudo-Riemannian manifold $M$
of dimension $(2n+1)$ is defined in \cite{K1} and the almost
paracomplex structure on $M^{(2n+1)}\times \mathbb{R}$ is
constructed. The properties of an almost paracontact metric
manifold and the gauge (conformal) transformations of a
paracontact metric manifold, i.e., transformations preserving the
paracontact structure, are studied in \cite{Z1}. Furthermore, in
this paper a canonical paracontact connection on a paracontact
metric manifold is defined. This connection is the paracontact
analogue of the (generalized) Tanaka-Webster connection. It is
shown that the torsion of the canonical paracontact connection
vanishes exactly when the structure is para-Sasakian and the gauge
transformation of its scalar curvature is computed.

An example of a paracontact structure with flat canonical
connection is the hyperbolic Heisenberg group \cite{I1}. The
paraconformal tensor gives a necessary and sufficient condition
for a $(2n+1)$-dimensional paracontact manifold to be locally
paracontact conformal to the hyperbolic Heisenberg group
\cite{I1}.

In this paper, we show that there is no flat, with respect to
Levi-Civita connection, paracontact metric structures in dimension
greater than or equal to five, whereas in dimension equal to three
there is.

\section{Preliminaries}
A (2n+1)-dimensional smooth manifold $M^{(2n+1)}$ has an
\emph{almost paracontact structure} $(\varphi,\xi,\eta)$ if it
admits a tensor field $\varphi$ of type $(1,1)$, a vector field
$\xi$ and a 1-form
$\eta$ satisfying the  following compatibility conditions % are satisfied:
\begin{eqnarray}
  \label{f82}
    & &
    \begin{array}{cl}
          (i)   & \varphi(\xi)=0,\quad \eta \circ \varphi=0,\quad
          \\[5pt]
          (ii)  & \eta (\xi)=1 \quad \varphi^2 = id - \eta \otimes \xi,
          \\[5pt]
          (iii) & \textrm{the tensor field $\varphi$ induces an almost paracomplex
                         structure (see \cite{K2})}
          \\[3pt]
                &  \textrm{on each fibre on the horizontal distribution $\mathbb D=Ker~\eta$.}
    \end{array}
\end{eqnarray}

Recall that an almost paracomplex structure on an 2n-dimensional
manifold is a (1,1)-tensor $J$ such that $J^2=1$ and the
eigensubbundles $T^+,T^-$ corresponding to the eigenvalues $1,-1$
of $J$ respectively, have dimensions equal to $n$. The Nijenhuis
tensor $N$ of $J$, given by
$N_{J}(X,Y)=[JX,JY]-J[JX,Y]-J[X,JY]+[X,Y],$ is the obstruction for
the integrability of the eigensubbundles $T^+,T^-$. If $N=0$ then
the almost paracomplex structure is called paracomplex or
integrable.

An immediate consequence of the definition of the almost
paracontact structure is that the endomorphism $\varphi$ has rank
$2n$, $\varphi \xi=0$ and $\eta \circ \varphi=0$, (see
\cite{B1,B2} for the almost contact case).

If a manifold $M^{(2n+1)}$ with $(\varphi,\xi,\eta)$-structure
admits a pseudo-Riemannian metric $g$ such that
\begin{equation}\label{con}
g(\varphi X,\varphi Y)=-g(X,Y)+\eta (X)\eta (Y),
\end{equation}
then we say that $M^{(2n+1)}$ has an almost paracontact metric structure and
$g$ is called \emph{compatible} metric. Any compatible metric $g$ with a given almost paracontact
structure is necessarily of signature $(n+1,n)$.

Setting $Y=\xi$, we have $\eta(X)=g(X,\xi).$

\smallskip

The fundamental 2-form
\begin{equation}\label{fund}
F(X,Y)=g(X,\varphi Y)
\end{equation}
is non-degenerate on the horizontal distribution $\mathbb D$ and
$\eta\wedge F^n\not=0$.
\begin{defn}
If $g(X,\varphi Y)=d\eta(X,Y)$ (where
$d\eta(X,Y)=\frac12(X\eta(Y)-Y\eta(X)-\eta([X,Y])$ then $\eta$ is
a paracontact form and the almost paracontact metric manifold
$(M,\varphi,\eta,g)$ is said to be $\emph{paracontact metric
manifold}$.
\end{defn}

\begin{defn}
An $r$-dimensional submanifold $N$ of $M^{(2n+1)}$ is said to be
an $\emph{integral}$ $\emph{submanifold}$ (of the horizontal
distribution $\mathbb D$) if and only if every tangent vector of
$N$ at every point $p$ of $N$ belongs to $\mathbb D$.
\end{defn}

\begin{defn}
An integral submanifold of dimension $r$ in $M^{(2n+1)}$ is said
to be a $\emph{maximal integral}$ $\emph{submanifold}$ if it is
not a pure subset of any other integral submanifold of dimension
$r$.
\end{defn}

Similarly to the contact metric case \cite{S1}, we may obtain the
following

\begin{pro}
Let $(M^{2n+1},\varphi,\eta,g)$ be a paracontact metric manifold.
Then the highest dimension of integral submanifold of the
horizontal distribution $\mathbb D$ is equal to $n$.
\end{pro}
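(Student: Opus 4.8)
The plan is to establish the two halves of the statement separately: an integral submanifold of $\mathbb D$ has dimension at most $n$, by symplectic linear algebra, and the value $n$ is actually attained, by a Darboux-type normal form for $\eta$. For the upper bound, let $N$ be an integral submanifold of $\mathbb D$, fix $p\in N$ and set $r=\dim N$. Since $TN\subset\ker\eta=\mathbb D$, the $1$-form $\eta$ pulls back to zero on $N$, hence so does $d\eta$; because the manifold is paracontact metric we have $F(X,Y)=g(X,\varphi Y)=d\eta(X,Y)$, so $F|_{T_pN}=0$. By the preliminaries $F$ restricts to a non-degenerate skew form on the $2n$-dimensional space $\mathbb D_p$, so its $F$-orthogonal complement $(T_pN)^{\perp_F}=\{v\in\mathbb D_p : F(v,w)=0\text{ for all }w\in T_pN\}$ has dimension $2n-r$; since $F|_{T_pN}=0$ this complement contains $T_pN$, whence $r\le 2n-r$, i.e. $r\le n$. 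This part is routine.

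For the reverse inequality, observe that $\eta\wedge(d\eta)^n\ne 0$ says precisely that $\eta$ is a contact form in the classical sense, so the Darboux theorem for contact forms supplies, near any point, coordinates $(x^1,\dots,x^n,y^1,\dots,y^n,z)$ in which $\eta=dz-\sum_{i=1}^n y^i\,dx^i$. The slice $N_0=\{y^1=\dots=y^n=0,\ z=0\}$, coordinatized by $(x^1,\dots,x^n)$, then satisfies $\eta|_{N_0}=0$, so $TN_0\subset\mathbb D$ and $N_0$ is an integral submanifold of dimension $n$. Together with the upper bound this shows the maximal dimension is exactly $n$, which is the direct analogue of the contact metric argument referred to in the text.

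I expect the only delicate point to be bookkeeping with the paper's normalization of $d\eta$ when passing from $\eta|_N=0$ to $F|_{TN}=0$, and --- should one prefer to avoid invoking the contact Darboux theorem --- producing a self-contained $n$-dimensional integral submanifold. In the latter case one can argue inductively at the level of integral elements: any nonzero $v_1\in\mathbb D_p$ spans a $1$-dimensional integral element, a $k$-dimensional isotropic subspace $V$ of $(\mathbb D_p,F)$ with $k<n$ is properly contained in $V^{\perp_F}$ and so extends to a $(k+1)$-dimensional one, and these elements are then integrated using the involutivity (in the Cartan--K\"ahler sense) of the exterior differential system generated by $\eta$. The Darboux route, however, seems the most economical.
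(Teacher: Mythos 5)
Your proposal is correct, and it is essentially the standard contact-geometric argument that the paper itself merely alludes to: the paper offers no proof of this proposition, saying only that it follows ``similarly to the contact metric case'' of Sasaki. Your two halves --- $T_pN$ is isotropic for the non-degenerate skew form $F=d\eta$ on the $2n$-dimensional space $\mathbb D_p$ (so $\dim N\le n$), and a Legendrian-type slice in Darboux coordinates realizes dimension $n$ --- are exactly that argument, and both steps go through unchanged in the pseudo-Riemannian paracontact setting since neither uses the signature of $g$.
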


The tensors $N^{(1)}$, $N^{(2)}$, $N^{(3)}$ and $N^{(4)}$ are
defined \cite{Z1} by
$$N^{(1)}(X,Y)=N_{\varphi}(X,Y) -2d\eta (X,Y)\xi,$$
$$N^{(2)}(X,Y)=(\pounds_{\varphi X}\eta)Y-(\pounds_{\varphi Y}\eta)X,$$
$$N^{(3)}(X)=(\pounds_{\xi}\varphi)X,$$
$$N^{(4)}(X)=(\pounds_{\xi}\eta)X.$$
Clearly the almost paracontact structure $(\varphi,\xi,\eta)$ is
normal if and only if these four tensors vanish.

In \cite{Z1} the following propositions are proven.
\begin{pro}\label{t2}
For an almost paracontact structure $(\varphi,\xi,\eta)$ the
vanishing of $N^{(1)}$ implies the vanishing $N^{(2)}$, $N^{(3)}$
and $N^{(4)}$;

For a paracontact structure $(\varphi,\xi,\eta,g)$, $N^{(2)}$ and
$N^{(4)}$ vanish. Moreover $N^{(3)}$ vanishes if and only if $\xi$
is a Killing vector field.
\end{pro}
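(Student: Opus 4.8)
The plan is to carry out direct computations with the Lie-derivative definitions, organised so that each of $N^{(2)},N^{(3)},N^{(4)}$ is extracted from $N^{(1)}$ (for the first assertion), and from the paracontact condition $d\eta=F$ (for the second). Throughout I use $N_{\varphi}(X,Y)=[\varphi X,\varphi Y]+\varphi^{2}[X,Y]-\varphi[\varphi X,Y]-\varphi[X,\varphi Y]$, and I first record two identities valid for every almost paracontact structure: since $\eta\circ\varphi=0$ and $\eta(\varphi^{2}Z)=\eta(Z)-\eta(Z)\eta(\xi)=0$, one has $\eta(N_{\varphi}(X,Y))=\eta([\varphi X,\varphi Y])$; and, putting $Y=\xi$ and using $\varphi\xi=0$ and $\varphi^{2}=id-\eta\otimes\xi$, a short computation gives $N_{\varphi}(X,\xi)=\varphi\,N^{(3)}(X)$ and $2\,d\eta(X,\xi)=-N^{(4)}(X)$.

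For the first assertion these combine into
\[
N^{(1)}(X,\xi)=\varphi\,N^{(3)}(X)+N^{(4)}(X)\,\xi .
\]
Applying $\eta$ and using $\eta\circ\varphi=0$, $\eta(\xi)=1$ gives $N^{(4)}(X)=\eta\big(N^{(1)}(X,\xi)\big)$, so $N^{(1)}=0$ forces $N^{(4)}=0$ and then $\varphi\,N^{(3)}=0$. Since $\ker\varphi=\Span{\xi}$ we may write $N^{(3)}(X)=\lambda(X)\xi$, and then $\lambda(X)=\eta(N^{(3)}(X))=\eta([\xi,\varphi X])=-N^{(4)}(\varphi X)=0$, so $N^{(3)}=0$. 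For $N^{(2)}$ I would expand $\eta\big(N^{(1)}(\varphi X,Y)\big)$ using the first identity together with $[\varphi^{2}X,\varphi Y]=[X,\varphi Y]-\eta(X)[\xi,\varphi Y]+\big((\varphi Y)\,\eta(X)\big)\xi$, arriving at the pointwise relation
\[
N^{(2)}(X,Y)=-\eta\big(N^{(1)}(\varphi X,Y)\big)+\eta(X)\,N^{(4)}(\varphi Y),
\]
from which $N^{(1)}=0$ (hence $N^{(4)}=0$) yields $N^{(2)}=0$. The only genuine bookkeeping hazard here is keeping the function-valued coefficients $\eta(X)$ in place when Leibniz-expanding brackets such as $[\,\eta(X)\xi,\varphi Y\,]$; the $\xi$-component must be isolated carefully.

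For the second assertion I use $d\eta(X,Y)=g(X,\varphi Y)=F(X,Y)$. Because $\eta\circ\varphi=0$ the term $Y(\eta(\varphi X))$ drops, so $(\pounds_{\varphi X}\eta)(Y)=2\,d\eta(\varphi X,Y)=2g(\varphi X,\varphi Y)$, and symmetry of $g$ gives $N^{(2)}(X,Y)=2g(\varphi X,\varphi Y)-2g(\varphi Y,\varphi X)=0$. Next $(\iota_{\xi}d\eta)(Y)=d\eta(\xi,Y)=g(\xi,\varphi Y)=\eta(\varphi Y)=0$ and $\iota_{\xi}\eta=1$, so Cartan's formula gives $\pounds_{\xi}\eta=\iota_{\xi}d\eta+d\,\iota_{\xi}\eta=0$, i.e. $N^{(4)}=0$, whence $\pounds_{\xi}F=\pounds_{\xi}\,d\eta=d(\pounds_{\xi}\eta)=0$. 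Writing $F=g(\,\cdot\,,\varphi\,\cdot\,)$ and applying the Leibniz rule for $\pounds_{\xi}$ to this $(0,2)$-tensor (recall $\pounds_{\xi}\varphi=N^{(3)}$),
\[
0=(\pounds_{\xi}F)(X,Y)=(\pounds_{\xi}g)(X,\varphi Y)+g\big(X,N^{(3)}(Y)\big).
\]
If $\xi$ is Killing then $\pounds_{\xi}g=0$, so $g(X,N^{(3)}(Y))=0$ for all $X$, hence $N^{(3)}=0$ by non-degeneracy of $g$. Conversely, if $N^{(3)}=0$ then $(\pounds_{\xi}g)(X,\varphi Y)=0$ for all $X,Y$; since $\varphi$ restricts to an involution of $\mathbb D=\ker\eta$, this gives $(\pounds_{\xi}g)(X,Z)=0$ for all $Z\in\mathbb D$, while $(\pounds_{\xi}g)(X,\xi)=(\pounds_{\xi}\eta)(X)=0$ handles the remaining direction, so $\pounds_{\xi}g=0$. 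I expect the identification $\pounds_{\xi}F=0$ and the Leibniz step producing the last display to be the pivotal points; everything after that is immediate from non-degeneracy of $g$ (and of $F$ on $\mathbb D$).
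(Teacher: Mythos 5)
The paper does not actually prove this proposition: it states it and refers to \cite{Z1} for the proof, so there is no in-paper argument to compare against line by line. Your proof is correct and self-contained, and it follows what is essentially the standard route (the paracontact analogue of Blair's treatment of the tensors $N^{(1)},\dots,N^{(4)}$ in the almost contact case, which is also the method of \cite{Z1}): I checked the key identities $N^{(1)}(X,\xi)=\varphi N^{(3)}(X)+N^{(4)}(X)\xi$, $\eta([\xi,\varphi Y])=-N^{(4)}(\varphi Y)$, and $N^{(2)}(X,Y)=-\eta\bigl(N^{(1)}(\varphi X,Y)\bigr)+\eta(X)N^{(4)}(\varphi Y)$, and they all hold with the paper's conventions; the second half (using $d\eta=F$, symmetry of $g$, Cartan's formula, and the Leibniz expansion of $\pounds_{\xi}F$) is likewise sound, including the converse direction where you split $TM=\mathbb D\oplus\Span{\xi}$ and use that $\varphi$ maps $\mathbb D$ onto itself. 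Two minor points worth flagging: the paper's $d\eta$ carries a factor $\tfrac12$, so Cartan's formula as you wrote it acquires a factor of $2$ on the $\iota_{\xi}d\eta$ term --- harmless here since that term vanishes anyway --- and your argument that $\varphi N^{(3)}=0$ forces $N^{(3)}=\lambda\otimes\xi$ uses $\ker\varphi=\Span{\xi}$, which is exactly the rank-$2n$ statement the paper records as an ``immediate consequence'' of the definition, so it is legitimately available.
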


\begin{pro}\label{l1}
For an almost paracontact metric structure $(\varphi,\xi,\eta,g)$,
the covariant derivative $\nabla\varphi$ of $\varphi$ with respect
to the Levi-Civita connection $\nabla$ is given by
\begin{gather}\label{f1}
2g((\nabla_X \varphi)Y,Z)= -dF(X,Y,Z)-dF(X,\varphi Y,\varphi
Z)-N^{(1)}(Y,Z,\varphi X)\\\nonumber
+N^{(2)}(Y,Z)\eta (X)-2d\eta (\varphi
Z,X)\eta (Y)+2d\eta (\varphi Y,X)\eta (Z).
\end{gather}
For a paracontact metric structure $(\varphi,\xi,\eta,g)$, the
formula \eqref{f1} simplifies to
\begin{equation}\label{f2}
2g((\nabla_X \varphi)Y,Z)= -N^{(1)}(Y,Z,\varphi X)-2d\eta (\varphi
Z,X)\eta (Y)+2d\eta (\varphi Y,X)\eta (Z)
\end{equation}
\end{pro}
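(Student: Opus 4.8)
The plan is to reduce \eqref{f1} to a computation with the Levi-Civita connection through $(\nabla_X\varphi)Y=\nabla_X(\varphi Y)-\varphi(\nabla_X Y)$ and then to reorganise the outcome into the tensors $dF$, $N^{(1)}$ and $N^{(2)}$. First I would record the algebraic preliminaries. Polarising \eqref{con} and using $\eta\circ\varphi=0$, $\varphi^2=\mathrm{id}-\eta\otimes\xi$, $\eta(\xi)=1$ gives $g(\varphi X,Y)=-g(X,\varphi Y)$, so $\varphi$ is $g$-skew-adjoint and $F$ is a genuine $2$-form; differentiating this identity together with $\nabla g=0$ shows that each $\nabla_X\varphi$ is $g$-skew-adjoint as well, whence $(\nabla_X F)(Y,Z)=g(Y,(\nabla_X\varphi)Z)=-g((\nabla_X\varphi)Y,Z)$ is also skew in $Y,Z$.

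The cleanest route is then a polarisation argument based on three relations. As $\nabla$ is torsion-free, the exterior derivative of the $2$-form $F$ equals, up to the normalisation of $d$ fixed by the definition of $d\eta$, the cyclic sum $(\nabla_X F)(Y,Z)+(\nabla_Y F)(Z,X)+(\nabla_Z F)(X,Y)$, which expresses $\sum_{\mathrm{cyc}}g((\nabla_X\varphi)Y,Z)$ in terms of $dF$. To single out one summand I would use a second, $\varphi$-twisted relation: replacing $(Y,Z)$ by $(\varphi Y,\varphi Z)$ and reducing $F(\varphi Y,\varphi Z)=-F(Y,Z)$ and $F(X,\varphi W)=g(X,W)-\eta(X)\eta(W)$ by means of $\varphi^2=\mathrm{id}-\eta\otimes\xi$ expresses $dF(X,\varphi Y,\varphi Z)$ through $\nabla\varphi$ plus explicit corrections carrying factors $\eta(\cdot)$. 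The third relation rewrites the Nijenhuis tensor through $\nabla$: torsion-freeness gives $N_\varphi(Y,Z)$ as a combination of $(\nabla_{\varphi Y}\varphi)Z$, $(\nabla_{\varphi Z}\varphi)Y$, $\varphi((\nabla_Y\varphi)Z)$, $\varphi((\nabla_Z\varphi)Y)$ and a $\xi$-valued remainder; pairing with $\varphi X$ kills that remainder and also the $2\,d\eta(Y,Z)\xi$ sitting inside $N^{(1)}$, so $N^{(1)}(Y,Z,\varphi X)=g(N_\varphi(Y,Z),\varphi X)$, and the remaining $g(\varphi\,\cdot\,,\varphi X)$ factors get simplified once more by \eqref{con}.

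Adding the three, I expect $-dF(X,Y,Z)-dF(X,\varphi Y,\varphi Z)-N^{(1)}(Y,Z,\varphi X)$ to reproduce $2g((\nabla_X\varphi)Y,Z)$ modulo the accumulated $\eta$-valued terms; the cancellation of the remaining \emph{bulk} part is precisely the one familiar from the almost para-Hermitian (resp.\ almost Hermitian) case. It then remains to assemble the $\eta$-terms: using $(\pounds_{\varphi Y}\eta)Z=\varphi Y(\eta(Z))-\eta([\varphi Y,Z])$ and, since $\eta\circ\varphi=0$, $2\,d\eta(\varphi Y,X)=\varphi Y(\eta(X))-\eta([\varphi Y,X])$, one checks that these combine exactly into $N^{(2)}(Y,Z)\eta(X)-2\,d\eta(\varphi Z,X)\eta(Y)+2\,d\eta(\varphi Y,X)\eta(Z)$, which gives \eqref{f1}. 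The hard part is exactly this last bookkeeping --- tracking every $\eta(\cdot)$-valued correction produced by the $\varphi^2$-reductions and by the expansion of $dF(X,\varphi Y,\varphi Z)$, and recognising their total as $N^{(2)}$, while keeping the normalisation of $d$ straight. (A more pedestrian alternative is to expand $2g(\nabla_X(\varphi Y),Z)+2g(\nabla_X Y,\varphi Z)$ by applying Koszul's formula twice and regrouping; the combinatorics is the same.)

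The passage from \eqref{f1} to \eqref{f2} is then immediate for a paracontact metric structure: by definition $F=d\eta$, so $dF=d(d\eta)=0$ and in particular $dF(X,\varphi Y,\varphi Z)=0$, while $N^{(2)}$ vanishes identically on a paracontact structure by Proposition \ref{t2}; substituting these into \eqref{f1} leaves exactly \eqref{f2}.
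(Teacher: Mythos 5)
The paper itself offers no proof of this proposition --- it is quoted from \cite{Z1} with the remark ``In \cite{Z1} the following propositions are proven'' --- so there is no in-paper argument to compare yours against. Judged on its own, your strategy is the standard and correct one: it is the paracontact analogue of the Kobayashi--Nomizu/Blair polarisation argument (express $dF$ as the cyclic sum of $\nabla F$, do the same for the $\varphi$-twisted arguments, rewrite $N_\varphi$ through the torsion-free connection, and add). Your algebraic preliminaries check out: skew-adjointness of $\varphi$ follows from \eqref{con} together with $\eta\circ\varphi=0$ and $\varphi^2=\mathrm{id}-\eta\otimes\xi$; the $\xi$-valued remainder in the $\nabla$-expression of $N_\varphi$ is indeed killed by pairing with $\varphi X$, as is the $2d\eta(Y,Z)\xi$ term inside $N^{(1)}$, so $N^{(1)}(Y,Z,\varphi X)=g(N_\varphi(Y,Z),\varphi X)$ as you claim. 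The deduction of \eqref{f2} from \eqref{f1} is also correct and non-circular: $F=d\eta$ forces $dF=0$, and the vanishing of $N^{(2)}$ is Proposition \ref{t2}, which is established independently of this formula.

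Two caveats. First, the proposal is a plan rather than a proof: the decisive step --- tracking every $\eta$-valued correction produced by the $\varphi^2$-reductions and verifying that their total is exactly $N^{(2)}(Y,Z)\eta(X)-2d\eta(\varphi Z,X)\eta(Y)+2d\eta(\varphi Y,X)\eta(Z)$ --- is announced but not carried out, and in the para-setting the signs differ from the Hermitian/contact case precisely because $\varphi^2=\mathrm{id}-\eta\otimes\xi$ and $g(\varphi X,\varphi Y)=-g(X,Y)+\eta(X)\eta(Y)$, so this bookkeeping cannot be borrowed verbatim from Blair. Second, the normalisation issue you flag is real and unresolved: with the convention $d\eta(X,Y)=\frac12(X\eta(Y)-Y\eta(X)-\eta([X,Y]))$ the cyclic sum $\sum_{\mathrm{cyc}}(\nabla_XF)(Y,Z)$ equals $3\,dF(X,Y,Z)$, not $dF(X,Y,Z)$, which is why the contact-metric analogue carries coefficients $3$ in front of $d\Phi$; whether \eqref{f1} as printed needs such factors should be settled by your computation. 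Neither caveat affects the paracontact case \eqref{f2}, where the $dF$-terms vanish identically, and that is the only form of the formula used later in the paper.
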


\begin{lem}\label{l2}
On a paracontact metric manifold, h is a symmetric operator,
\begin{equation}\label{f3}
\nabla_X\xi=- \varphi X+\varphi hX,
\end{equation}
h anti-commutes with $\varphi$ and $trh=h\xi =0$.
\end{lem}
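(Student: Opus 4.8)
The plan is to work throughout with Lie derivatives in the direction of $\xi$. Write $h=\tfrac12\pounds_{\xi}\varphi$ (so that $N^{(3)}=2h$), and recall that $\varphi$ is skew with respect to $g$ (since $F(X,Y)=g(X,\varphi Y)$ is a $2$-form) and that, by Proposition~\ref{t2}, $N^{(4)}=\pounds_{\xi}\eta=0$ on a paracontact metric manifold. Several assertions come for free: applying $\pounds_{\xi}$ to $\varphi\xi=0$ gives $h\xi=0$; applying it to $\eta\circ\varphi=0$ and using $\pounds_{\xi}\eta=0$ gives $\eta\circ h=0$, so $h$ maps $\mathbb{D}=\ker\eta$ into itself and annihilates $\xi$; and $(\pounds_{\xi}g)(\,\cdot\,,\xi)=\pounds_{\xi}\eta=0$, so the symmetric tensor $k:=\pounds_{\xi}g$ is horizontal, while $g(\nabla_{X}\xi,\xi)=\tfrac12 X\,g(\xi,\xi)=0$ shows $\nabla_{X}\xi\in\mathbb{D}$.

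The core of the proof is to identify $k=\pounds_{\xi}g$. Since $\iota_{\xi}d\eta=d\eta(\xi,\,\cdot\,)=0$ and $d(d\eta)=0$, Cartan's formula gives $\pounds_{\xi}(d\eta)=0$. Differentiating the two compatibility identities along $\xi$ with the Leibniz rule then yields, from $g(\,\cdot\,,\varphi\,\cdot\,)=d\eta$,
\[
k(X,\varphi Y)+2g(X,hY)=0,
\]
and, from $g(\varphi\,\cdot\,,\varphi\,\cdot\,)=-g+\eta\otimes\eta$ together with $\pounds_{\xi}\eta=0$,
\[
k(X,Y)+k(\varphi X,\varphi Y)+2g(hX,\varphi Y)+2g(\varphi X,hY)=0.
\]
Replacing $Y$ by $\varphi Y$ in the first relation and using $\varphi^{2}=\mathrm{id}-\eta\otimes\xi$, $h\xi=0$ and $k(\,\cdot\,,\xi)=0$ gives $k(X,Y)=-2g(X,h\varphi Y)$; fed into the second relation this collapses to $k(X,Y)=-2g(hX,\varphi Y)=2g(\varphi hX,Y)$. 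Comparing the two closed expressions for $k$ and using $k(X,Y)=k(Y,X)$ now forces simultaneously $\varphi h+h\varphi=0$ and that $h\varphi$ is $g$-self-adjoint; combining these (together with $\eta\circ h=0$ and $h\xi=0$, which put the image of $h$ and of its $g$-adjoint $h^{\top}$ in $\mathbb{D}$) yields $\varphi(h^{\top}-h)=0$ and hence $h^{\top}=h$, i.e.\ $h$ is symmetric.

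The remaining two statements are now short. From the identity $2g(\nabla_{X}\xi,Y)=2\,d\eta(X,Y)+(\pounds_{\xi}g)(X,Y)$ — valid whenever the vector field is $g$-dual to the $1$-form, and compatible with the paper's normalisation of $d\eta$ — together with $2\,d\eta(X,Y)=2g(X,\varphi Y)=-2g(\varphi X,Y)$ and $k(X,Y)=2g(\varphi hX,Y)$, one reads off $\nabla_{X}\xi=-\varphi X+\varphi hX$. (The same formula can instead be extracted from \eqref{f2} by putting $Y=\xi$ and rewriting $N^{(1)}(\xi,Z,\varphi X)$ via Lie brackets as $2g(hZ,X)$, but that route still needs the symmetry of $h$ to recognise the symmetric part of $X\mapsto\nabla_{X}\xi$.) Finally, $\mathrm{id}=\varphi^{2}+\eta\otimes\xi$ with $\eta\circ h=0$, $h\xi=0$ gives $\tr h=\tr(\varphi^{2}h)$, and $\varphi h=-h\varphi$ turns $\varphi^{2}h$ into $-\varphi h\varphi$, so by cyclicity of the trace $\tr(\varphi^{2}h)=-\tr(\varphi^{2}h)$ and $\tr h=0$.

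I expect Step~2 — the symmetry of $h$ — to be the only real obstacle. A direct attempt is circular: the antisymmetric part of $g(\nabla_{X}\xi,Y)$ records only $d\eta$, and the relation $2h=\varphi S-S\varphi$ with $S$ the symmetric part of $X\mapsto\nabla_{X}\xi$ determines just the off-diagonal block of $S$ relative to the eigenspace splitting of $\varphi|_{\mathbb{D}}$. One genuinely has to differentiate \emph{both} compatibility identities along $\xi$ and use that $\pounds_{\xi}g$ is a symmetric tensor.
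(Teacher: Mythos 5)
Your proof is correct, and it is self-contained in a way the paper is not: the paper states Lemma~\ref{l2} without proof, importing it from \cite{Z1}, where the standard derivation (modelled on Blair's treatment of the contact case) obtains $\nabla_X\xi=-\varphi X+\varphi hX$ by setting $Y=\xi$ in the covariant-derivative formula \eqref{f2} and gets the symmetry and anti-commutation of $h$ by expanding $\pounds_\xi\varphi$ through $\nabla\varphi$ and $\nabla\xi$. Your route is genuinely different: you never touch \eqref{f1}--\eqref{f2}, instead Lie-differentiating the two compatibility identities $g(\cdot,\varphi\cdot)=d\eta$ and $g(\varphi\cdot,\varphi\cdot)=-g+\eta\otimes\eta$ along $\xi$ to pin down $k=\pounds_\xi g$ in the two closed forms $k(X,Y)=-2g(X,h\varphi Y)=2g(\varphi hX,Y)$, from which the symmetry of $h$, the anti-commutation, and (via the decomposition of $g(\nabla_X\xi,Y)$ into its antisymmetric part $2d\eta(X,Y)$ and symmetric part $k(X,Y)$) formula \eqref{f3} all drop out. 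I checked the steps: the preliminary identities $h\xi=0$, $\eta\circ h=0$, $k(\cdot,\xi)=0$ are right; $\pounds_\xi d\eta=0$ follows by Cartan's formula since $\iota_\xi d\eta=0$; both displayed relations for $k$ are correct; equating the two closed forms gives $(h-h^{\top})\varphi=0$, and since $h$ and $h^{\top}$ both annihilate $\xi$ and map into $\mathbb D$ this yields $h=h^{\top}$, while the symmetry of $k$ then gives $\varphi h+h\varphi=0$; your normalisation of $d\eta$ matches the paper's own statement $d\eta(X,Y)=\tfrac12(g(\nabla_X\xi,Y)-g(\nabla_Y\xi,X))$; and the trace argument via cyclicity is fine. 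The only mild leaps are the unproved assertion that $\varphi$ is $g$-skew (immediate from \eqref{con} together with $\eta\circ\varphi=0$, or from $F=d\eta$) and the appeal to Proposition~\ref{t2} for $\pounds_\xi\eta=0$, which in any case follows in one line from $\iota_\xi d\eta=0$ and $\eta(\xi)=1$. What your approach buys is independence from the somewhat heavy formula \eqref{f2}; the cost is negligible here since that formula is needed elsewhere in the paper anyway.
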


\section{Non-existence of flat paracontact metric structures in dimension greater than or
equal to five}

In this section we shall show that every paracontact metric
manifold of dimension greater than or equal to five must have some
curvature though not necessarily in the plane sections containing
$\xi$.
\begin{thm}\label{p1}
Let $M^{2n+1}$ be a paracontact manifold of dimension greater than
or equal to five. Then $M^{2n+1}$ cannot admit a paracontact
structure of vanishing curvature.
\end{thm}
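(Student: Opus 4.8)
The strategy is to transplant Olszak's proof of the non-existence of flat contact metric manifolds of dimension $\ge 5$ to the paracontact setting, using Lemma~\ref{l2} and Proposition~\ref{l1} in place of their contact analogues. So suppose, for contradiction, that $(M^{2n+1},\varphi,\eta,g)$ with $2n+1\ge 5$ carries a paracontact metric structure whose Levi-Civita curvature $R$ is identically zero.

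\textbf{Step 1 (the $\xi$-directions).} Specialising \eqref{f2} to $X=\xi$: since $\varphi\xi=0$ one has $N^{(1)}(Y,Z,\varphi\xi)=0$, and on a paracontact metric manifold $d\eta(\varphi Z,\xi)=g(\varphi Z,\varphi\xi)=0$, so the right-hand side vanishes and $\nabla_\xi\varphi=0$. Combining \eqref{f3} (i.e. $\nabla_X\xi=-\varphi X+\varphi hX$) with $\nabla_\xi\xi=0$ and a direct computation using $\varphi^2=\mathrm{id}-\eta\otimes\xi$, $h\varphi=-\varphi h$, $h\xi=0$ (Lemma~\ref{l2}), the Jacobi operator $\ell X:=R(X,\xi)\xi$ takes the form
\[
\ell X=-\varphi(\nabla_\xi h)X+h^2X-X+\eta(X)\xi .
\]
Flatness forces $\ell\equiv0$; applying $\varphi$ and using $\eta\circ(\nabla_\xi h)=0$ yields the structural identity $\nabla_\xi h=(h^2-\mathrm{id})\varphi$. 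Taking the trace of the $\ell$-equation, and noting $\mathrm{tr}(\varphi\,\nabla_\xi h)=0$ because $\varphi$ is $g$-skew while $\nabla_\xi h$ is $g$-symmetric, one gets $\mathrm{tr}(h^2)=2n$; in particular $h\not\equiv 0$, so the structure is not para-Sasakian.

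\textbf{Step 2 (the horizontal directions).} Next exploit $R(X,Y)\xi\equiv0$ and $R(\xi,X)Y\equiv0$ for $X,Y\in\mathbb D=\ker\eta$. From \eqref{f3}, writing $QX=\nabla_X\xi=-\varphi X+\varphi hX$, one has $R(X,Y)\xi=(\nabla_XQ)Y-(\nabla_YQ)X$; expanding $\nabla_XQ=-\nabla_X\varphi+(\nabla_X\varphi)h+\varphi\nabla_Xh$ and inserting \eqref{f2} converts $R(\cdot,\cdot)\xi=0$ into a linear identity relating $\nabla_Xh$, $(\nabla_X\varphi)$, $h$, $\varphi$ and the restriction $N^{(1)}|_{\mathbb D}$. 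Combining this with the identity produced by $R(\xi,X)Y=0$ and with the first Bianchi identity, the plan is first to pin down $N^{(1)}|_{\mathbb D}$ (hence, via \eqref{f2}, $\nabla_X\varphi$) and then to solve for $\nabla_Xh$ as a universal expression in $h$, $\varphi$, $g$, $\eta$. Feeding these back into the full curvature tensor, I would evaluate a suitably chosen component — for instance a horizontal sectional curvature such as $g(R(X,\varphi X)\varphi X,X)$, or $R(X,Y)$ with $Y$ an eigenvector of $h$ — and exhibit a component that is a non-zero polynomial in the relevant eigenvalue of $h$ as soon as $\dim\mathbb D\ge4$, contradicting $R\equiv0$.

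The main obstacle is the last step, together with the curvature bookkeeping in Step 2. Unlike the contact case, formula \eqref{f2} still carries the term $N^{(1)}$, so $\nabla_Xh$ cannot be isolated until $N^{(1)}|_{\mathbb D}$ has itself been forced by flatness; throughout, the sign changes and the extra $\eta\otimes\xi$ contributions peculiar to the paracontact normalisation \eqref{con} must be tracked with care. The genuinely delicate point — and the place where $2n+1\ge5$ is essential — is that $g$ has indefinite signature $(n+1,n)$, so the $g$-symmetric operator $h$ on $\mathbb D$ need not be diagonalisable, and one cannot simply split $\mathbb D$ into $h$-eigenlines paired by $\varphi$ as in the Riemannian argument. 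To get around this I would work instead with $h^2$, which preserves the eigendistributions $\mathbb D^\pm$ of $\varphi|_{\mathbb D}$ and is exchanged with itself by $\varphi$, arguing on its generalised eigenspaces (or passing to a $\varphi$-adapted pseudo-orthonormal frame), and show that for $\dim\mathbb D\ge4$ the over-determined system forced by $R\equiv0$ has no solution — whereas in dimension three it does, consistently with the existence of flat three-dimensional examples.
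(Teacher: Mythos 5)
Your Step 1 is sound, but the proof as a whole has a genuine gap: Step 2 and the final contradiction are a programme, not an argument. You state that you ``would'' pin down $N^{(1)}|_{\mathbb D}$, ``would'' solve for $\nabla_X h$, and ``would'' exhibit a non-vanishing curvature component, but none of this is carried out, and this is precisely where all the work lies. Moreover, the difficulty that drives your proposed detour --- that in signature $(n+1,n)$ the symmetric operator $h$ need not be diagonalisable, so you want to pass to $h^2$ and its generalised eigenspaces --- dissolves at the outset: flatness applied to the symmetrised Jacobi identity $\tfrac12\bigl(R(\xi,X)\xi+\varphi R(\xi,\varphi X)\xi\bigr)=\varphi^2X-h^2X$ forces $h^2=\varphi^2$, which is the identity on $\mathbb D$; hence $h|_{\mathbb D}$ is an involution, automatically diagonalisable with eigenvalues $\pm1$ and $\varphi$ interchanging the eigenspaces $[\pm1]$. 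You only extract the weaker trace information $\tr(h^2)=2n$ from your unsymmetrised $\ell$-equation, and so you deprive yourself of the decomposition on which the whole argument rests.

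The paper's proof (a paracontact transplant of Blair's flat contact metric argument rather than Olszak's) runs as follows: from $h^2=\varphi^2$ one gets $\mathbb D=[+1]\oplus[-1]$; $R(X,Y)\xi=0$ shows $[+1]$ and $[+1]\oplus[\xi]$ are integrable, so one can choose a frame $X_1,\dots,X_n$ of $[-1]$ adapted to foliated coordinates; repeated use of \eqref{z1} and the vanishing of $R(\cdot,\cdot)\xi$ yields $\nabla_{\varphi X_j}\varphi X_i=0$, $[X_i,X_j]=0$, $\nabla_{X_i}X_j\in[-1]$ and $(\nabla_X\varphi)Y=-2g(X,Y)\xi$ for $X,Y\in[-1]$; differentiating this last identity and using flatness once more gives
$$g(X_i,X_j)g(X_k,X_l)-g(X_k,X_j)g(X_i,X_l)=0,$$
i.e.\ the Gram matrix of $g|_{[-1]}$ has rank $\le 1$, which is impossible for $n\ge2$ since $[-1]$ is an $n$-dimensional non-degenerate subspace. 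Note that the restriction $\dim\ge5$ enters only at this very last line (one needs two independent $X_i$'s), not through any failure of diagonalisability. To repair your proposal you would either have to execute your Step 2 in full --- a substantial computation you have not begun --- or, more efficiently, upgrade Step 1 to the identity $h^2=\varphi^2$ and then follow the eigenspace--integrability route above.
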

\begin{proof}
The proof will be by contradiction. We let $(\varphi,\xi,\eta,g)$
denote the structure tensors of a paracontact metric structure and
assume that $g$ is flat. From \cite{Z1} we have that for a
paracontact metric structure
$$\frac12(R( \xi , X) \xi + \varphi R( \xi ,\varphi X) \xi)=
\varphi^2 X - h^2X,$$
where $h=\frac12\mathcal{L}_{\xi}\varphi$.
Thus if $g$ is flat $h^2=\varphi^2$ and hence $h\xi=0$ and
$rank(h)=2n$. The eigenvectors corresponding to the non-zero
eigenvalues of $h$ are orthogonal to $\xi$ and the non-zero
eigenvalues are $\pm 1$. Recall that
$d\eta(X,Y)=\frac12(g(\nabla_X\xi,Y)-g(\nabla_Y\xi,X))$ and that
for a paracontact metric structure
\begin{equation}\label{z1}
\nabla_X\xi=- \varphi X+\varphi hX.
\end{equation}
From $Lemma~\ref{l2}$ follows that whenever $X$ is an eigenvector
of eigenvalue $+1$, $\varphi X$ is an eigenvector of $-1$ and
vice-versa. Thus the paracontact distribution $\mathbb D$ is
decomposed into the orthogonal eigenspaces of $\pm 1$ which we
denote by $[+1]$ and $[-1]$.

We now show that the distribution $[+1]$ is integrable. If $X$ and
$Y$ are vector fields belonging to $[+1]$, equation $(\ref{z1})$
gives $\nabla_X\xi=0$ and $\nabla_Y\xi=0$. Thus since $M^{2n+1}$
is flat
$$0=R(X,Y)\xi=-\nabla_{[X,Y]}\xi=\varphi [X,Y]-\varphi h[X,Y];$$
but $\eta([X,Y])=-2d\eta(X,Y)=-2g(X,\varphi Y)=0$, so that
$h[X,Y]=[X,Y]$. Applying the same argument to $\xi$ and $X \in
[+1]$ we see that the distribution $[+1]\oplus[\xi]$ spanned by
$[+1]$ and $\xi$ is also integrable.

Since  $[+1]\oplus[\xi]$ is integrable, we can choose local
coordinates $(u^0,u^1,...,u^{2n})$ such that
$\frac{\partial}{\partial u^0},\frac{\partial}{\partial
u^1},...,\frac{\partial}{\partial u^n} \in [+1]\oplus[\xi]$. We
define local vector fields $X_i$, $i=1,...,n$ by
$X_i=\frac{\partial}{\partial
u^{n+i}}+\sum_{j=0}^{n}f_i^j\frac{\partial}{\partial u^j},$ where
the $f_i^j$'s are functions chosen so that $X_i \in [-1]$. Note
$X_1,...,X_n$ are $n$ linearly independent vector fields spanning
$[-1]$. Clearly $[\frac{\partial}{\partial u^k},X_i]\in
[+1]\oplus[\xi]$ for $k=0,...,n$ and hence $\xi$ is parallel along
$[\frac{\partial}{\partial u^k},X_i]$. Therefore using
$(\ref{z1})$ and the vanishing curvature
$$0=\nabla_{[\frac{\partial}{\partial u^k},X_i]}\xi=\nabla_{\frac{\partial}{\partial u^k}}\nabla_{X_i}\xi-\nabla_{X_i}\nabla_{\frac{\partial}{\partial u^k}}\xi=-2\nabla_{\frac{\partial}{\partial u^k}}\varphi X_i$$
from which we have
\begin{equation}\label{z2}
\nabla_{\varphi X_j}\varphi X_i=0.
\end{equation}
Similarly, noting that $[X_i,X_j] \in [+1],$
$$0=R(X_i,X_j)\xi=\nabla_{X_i}\nabla_{X_j}\xi-\nabla_{X_j}\nabla_{X_i}\xi-\nabla_{[X_i,X_j]}\xi=-2\nabla_{X_i}\varphi X_j+2\nabla_{X_j}\varphi X_i$$
giving
\begin{equation}\label{z3}
\nabla_{X_i}\varphi X_j=\nabla_{X_j}\varphi X_i
\end{equation}
or equivalently
\begin{equation}\label{z4}
\varphi
[X_i,X_j]=-(\nabla_{X_i}\varphi)X_j+(\nabla_{X_j}\varphi)X_i.
\end{equation}
Using equations $(\ref{z1})$ and $(\ref{z2})$ we obtain
$$0=R(X_i,\varphi X_j)\xi=-\nabla_{[X_i,\varphi X_j]}\xi=\varphi [X_i,\varphi X_j]-\varphi h[X_i,\varphi X_j]$$
from which
$$g([X_i,\varphi X_j],X_k)=g(h[X_i,\varphi X_j],X_k)=g([X_i,\varphi X_j],hX_k)=-g([X_i,\varphi X_j],X_k)$$
and hence
\begin{equation}\label{z5}
g([X_i,\varphi X_j],X_k)=0.
\end{equation}
Using the formula $(\ref{f2})$ and equations $(\ref{z2})$,
$(\ref{z4})$ and $(\ref{z5})$ we have
$$2g((\nabla_{X_i}\varphi)X_j,X_k)=-N^{(1)}(X_j,X_k,\varphi X_i)=-g([X_j,X_k],\varphi X_i)=$$
$$=-g((\nabla_{X_j}\varphi)X_k,X_i)+g((\nabla_{X_k}\varphi)X_j,X_i).$$
From $F=d\eta,$ we obtain
$\sigma_{i,j,k}g((\nabla_{X_i}\varphi)X_j,X_k)=0.$ Thus our
computation yields $g((\nabla_{X_i}\varphi)X_j,X_k)=0$. Similarly
$$2g((\nabla_{X_i}\varphi)X_j,\varphi X_k)=-N^{(1)}(X_j,\varphi X_k,\varphi
X_i)=-g([X_j,\varphi X_k],\varphi X_i)-g([\varphi X_j,X_k],\varphi
X_i)=$$
$$=-g(\nabla_{X_j}\varphi X_k-\nabla_{\varphi X_k}X_j-\nabla_{X_k}\varphi X_j+\nabla_{\varphi X_j}X_k,\varphi X_i)$$
which vanishes by equations $(\ref{z2})$ and $(\ref{z3})$. Finally
$$2g((\nabla_{X_i}\varphi)X_j,\xi)=-N^{(1)}(X_j,\xi,\varphi X_i)=-g(\varphi^2[X_j,\xi],\varphi X_i)+2d\eta(\varphi X_j,X_i)=$$
$$=-4g(X_i,X_j).$$
Thus for any vector fields $X$ and $Y$ in $[-1]$ on a paracontact
metric manifold such that $\xi$ is annihilated by the curvature
transformation,
\begin{equation}\label{z6}
(\nabla_X\varphi)Y=-2g(X,Y)\xi.
\end{equation}
Note that equation $(\ref{z4})$ now gives $[X_i,X_j]=0$.

Analogously, we obtain
\begin{equation}\label{z7}
2g(\nabla_{\varphi X_i}X_j,X_k)=2g((\nabla_{\varphi
X_i}\varphi)X_j,\varphi X_k)=0.
\end{equation}
Therefore by equation $(\ref{z5})$, we get
$$g(\nabla_{X_i}X_j,\varphi X_k)=-g(X_j,\nabla_{X_i}\varphi X_k)=-g(X_j,[X_i,\varphi X_k])=0.$$
It is trivial that $g(\nabla_{X_i}X_j,\xi)=0$ and hence we obtain
$\nabla_{X_i}X_j \in [-1]$.

Differentiating equation $(\ref{z6})$ we have
$$\nabla_{X_k}\nabla_{X_i}\varphi X_j-(\nabla_{X_k}\varphi)\nabla_{X_i}X_j-\varphi \nabla_{X_k}\nabla_{X_i}X_j=$$
$$=-2X_k(g(X_i,X_j))\xi+4g(X_j,X_i)\varphi X_k.$$
Taking the inner product with $\varphi X_l,$ having in mind
equation $(\ref{z6})$ and that $\nabla_{X_i}X_j \in [-1]$, we
obtain
\begin{equation}\label{z8}
g(\nabla_{X_k}\nabla_{X_i}\varphi X_j,\varphi
X_l)+g(\nabla_{X_k}\nabla_{X_i}X_j,X_l)=-4g(X_j,X_i)g(X_k,X_l)
\end{equation}
Interchanging $i$ and $k$, $i\not=k$ and subtracting we have
$$g(X_i,X_j)g(X_k,X_l)-g(X_k,X_j)g(X_i,X_l)=0$$
by virtue of the flatness and $[X_i,X_j]=0$.

Setting $i=j$ and $k=l$ we have
$$g(X_i,X_i)g(X_k,X_k)-g(X_i,X_k)g(X_i,X_k)=0$$
contradicting the linear independence of $X_i$ and $X_k$.
\end{proof}

Note that in the proof of our theorem, the vanishing of
$R(X,Y)\xi$ is enough to obtain the decomposition of the
paracontact distribution into $\pm 1$ eigenspaces of the operator
$h=\frac12\mathcal{L}_{\xi}\varphi$. Moreover $R(X,Y)\xi=0$ for
$X$ and $Y$ in $[+1]$ is sufficient for the integrability of
$[+1]$. Thus we have the following
\begin{thm}\label{p2}
Let $M^{2n+1}$ be a paracontact manifold with paracontact metric
structure $(\varphi,\xi,\eta,g)$. If the sectional curvatures of
all plane sections containing $\xi$ vanish, then the operator $h$
has rank $2n$ and the paracontact distribution is decomposed into
the $\pm 1$ eigenspaces of $h$. Moreover if $R(X,Y)\xi=0$ for $X,
Y \in [+1]$, $M$ admits a foliation by $n-$dimensional integral
submanifolds of the paracontact distribution along which $\xi$ is
parallel.
\end{thm}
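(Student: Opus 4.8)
The plan is to revisit the proof of Theorem~\ref{p1} and to isolate which curvature input each step actually uses; the present statement is exactly what survives when one keeps only the two hypotheses ``sectional curvatures of $\xi$-planes vanish'' and ``$R(X,Y)\xi=0$ for $X,Y\in[+1]$''.

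First I would recover the eigenspace decomposition. The curvature operator $\ell\colon X\mapsto R(X,\xi)\xi$ is $g$-symmetric by the pair symmetry of the Riemann tensor, $\ell\xi=0$, and $g(\ell X,X)=R(X,\xi,\xi,X)$ equals, up to a nonzero factor, the sectional curvature of $\operatorname{span}(X,\xi)$ whenever that plane is nondegenerate. Hence the hypothesis gives $g(\ell X,X)=0$ on a dense set of tangent vectors, so identically by continuity, and polarisation together with the symmetry of $\ell$ forces $\ell=0$, that is $R(\xi,X)\xi=0$ for every $X$. Substituting into the identity
$$\tfrac12\bigl(R(\xi,X)\xi+\varphi R(\xi,\varphi X)\xi\bigr)=\varphi^{2}X-h^{2}X$$
quoted from \cite{Z1} yields $h^{2}=\varphi^{2}$, hence $h\xi=0$, $\operatorname{rank}h=2n$, and the eigenvalues of $h$ on $\mathbb D=\operatorname{Ker}\eta$ are $\pm1$. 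By Lemma~\ref{l2} the operator $h$ anticommutes with $\varphi$, and since $\varphi^{2}=\mathrm{id}$ on $\mathbb D$ the endomorphism $\varphi$ is an involutive isomorphism of $\mathbb D$ interchanging the (smooth, rank~$n$) eigendistributions $[+1]$ and $[-1]$; thus $\mathbb D=[+1]\oplus[-1]$ is an orthogonal decomposition. This reproduces the first paragraph of the proof of Theorem~\ref{p1}, which indeed used only $R(\xi,\cdot)\xi=0$.

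Next, assuming in addition $R(X,Y)\xi=0$ for all $X,Y\in[+1]$, for $X\in[+1]$ equation~\eqref{z1} gives $\nabla_X\xi=-\varphi X+\varphi hX=0$; hence for $X,Y\in[+1]$,
$$0=R(X,Y)\xi=-\nabla_{[X,Y]}\xi=\varphi[X,Y]-\varphi h[X,Y].$$
Since $\eta([X,Y])=-2d\eta(X,Y)=-2g(X,\varphi Y)=0$ (because $\varphi Y\in[-1]$ is orthogonal to $X\in[+1]$), we have $[X,Y]\in\mathbb D$; as $h$ preserves $\mathbb D$ and $\varphi$ is invertible there, this forces $h[X,Y]=[X,Y]$, i.e.\ $[X,Y]\in[+1]$. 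So $[+1]$ is involutive, hence integrable by the Frobenius theorem. Its leaves are $n$-dimensional submanifolds tangent to $[+1]\subset\mathbb D$, so they are integral submanifolds of the paracontact distribution, and $\nabla_X\xi=0$ for every field $X\in[+1]$ means $\xi$ is parallel along the foliation. This is the assertion of the theorem.

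The only delicate point, and hence the main (rather mild) obstacle, is the first implication: passing from ``all $\xi$-sectional curvatures vanish'' to ``$R(\xi,X)\xi=0$'' requires, in the pseudo-Riemannian setting, handling planes $\operatorname{span}(X,\xi)$ that are degenerate, which the density/continuity argument above takes care of. Everything else is a transcription of the relevant portions of the proof of Theorem~\ref{p1}, after checking that the integrability argument for $[+1]$ invokes the curvature only through $R(X,Y)\xi=0$ with $X,Y\in[+1]$.
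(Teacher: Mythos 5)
Your proof is correct and follows essentially the same route as the paper, which establishes Theorem~\ref{p2} simply by observing that the first two paragraphs of the proof of Theorem~\ref{p1} use the curvature hypothesis only through $R(\xi,\cdot)\xi=0$ (for the eigenspace decomposition of $h$) and through $R(X,Y)\xi=0$ for $X,Y\in[+1]$ (for the integrability of $[+1]$ and the parallelism of $\xi$ along its leaves). Your additional density/polarisation argument passing from the vanishing of the sectional curvatures of nondegenerate $\xi$-planes to $R(\xi,X)\xi=0$ addresses a point the paper leaves implicit in the pseudo-Riemannian setting, but it does not change the overall structure of the argument.
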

From $Theorem~\ref{p1}$ and $Theorem~\ref{p2}$ we obtain following
\begin{thm}\label{p3}
Let $M^{2n+1}$ be a paracontact metric manifold and suppose that\\
$R(X,Y)\xi=0$ for all vector fields $X$ and $Y$. Then locally
$M^{2n+1}$ is the product of a flat $(n+1)$-dimensional manifold
and $n$-dimensional manifold of negative constant curvature equal
to $-4$.
\end{thm}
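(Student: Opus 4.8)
The plan is to upgrade the two integrable distributions produced in Theorem~\ref{p2} to \emph{parallel} distributions, apply the de~Rham decomposition theorem, and then read off the curvature of the two factors from the structure equations already in hand. First I would record what the hypothesis gives for free: since $R(X,Y)\xi=0$ for all $X,Y$, the pair symmetry $g(R(X,Y)Z,W)=g(R(Z,W)X,Y)$ forces $R(\xi,X)Y=0$ as well, so in particular every plane section through $\xi$ is flat and Theorem~\ref{p2} applies. Consequently $h$ has rank $2n$, $\mathbb D=[+1]\oplus[-1]$, and all the identities in the proof of Theorem~\ref{p1} that used only the condition $R(X,Y)\xi=0$ are available: the distribution $T_1:=[+1]\oplus[\xi]$ is integrable; the local frame $X_1,\dots,X_n$ of $T_2:=[-1]$ satisfies $[X_i,X_j]=0$ (so $T_2$ is integrable) and $\nabla_{X_i}X_j\in[-1]$; and $(\nabla_X\varphi)Y=-2g(X,Y)\xi$ for $X,Y\in[-1]$ by \eqref{z6}. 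From \eqref{z1} one reads off $\nabla_X\xi=0$ for $X\in[+1]$, $\nabla_\xi\xi=0$, and $\nabla_X\xi=-2\varphi X$ for $X\in[-1]$, while \eqref{z2} gives $\nabla_{[+1]}[+1]=0$; also $h^2=\varphi^2$, exactly as in the proof of Theorem~\ref{p1}.

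The next step is to show that $\nabla$ leaves both $T_1$ and $T_2$ invariant. The only direction not yet controlled is $\xi$, so I would first establish $\nabla_\xi\varphi=0$ and $\nabla_\xi h=0$. The first follows by putting $X=\xi$ in \eqref{f2}: since $\varphi\xi=0$ and $d\eta(\varphi Z,\xi)=g(\varphi Z,\varphi\xi)=0$, the whole right-hand side vanishes. For the second I would differentiate \eqref{z1} along $\xi$ and use $R(\xi,Y)\xi=0$ together with $\nabla_\xi\xi=0$, $\nabla_\xi\varphi=0$ and $h^2=\varphi^2$; the identity collapses to $\varphi\bigl((\nabla_\xi h)Y\bigr)=0$, and since $(\nabla_\xi h)Y$ is orthogonal to $\xi$ this yields $\nabla_\xi h=0$. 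Hence $\nabla_\xi$ commutes with $h$ and preserves $[+1]$ and $[-1]$. Assembling: $\nabla_{[+1]}[+1]=0$, $\nabla_{[+1]}\xi=0$, $\nabla_\xi\xi=0$, $\nabla_\xi[+1]\subseteq[+1]$, together with $\nabla_X(\varphi Y)=\varphi(\nabla_XY)+(\nabla_X\varphi)Y\in[+1]\oplus[\xi]$ for $X,Y\in[-1]$ (using $\nabla_XY\in[-1]$ and \eqref{z6}) and $\nabla_X\xi=-2\varphi X\in[+1]$ for $X\in[-1]$, show that $\nabla$ preserves $T_1$; and $\nabla_{[-1]}[-1]\subseteq[-1]$, $\nabla_{[+1]}[-1]\subseteq[-1]$ (the latter from \eqref{z2} and \eqref{z7}) together with $\nabla_\xi[-1]\subseteq[-1]$ show that $\nabla$ preserves $T_2$.

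As $T_1$ and $T_2$ are parallel, mutually orthogonal, complementary, and each non-degenerate (the eigenspaces $[+1],[-1]$ of $h$ are non-degenerate for $g$, being interchanged by $\varphi$), the de~Rham decomposition theorem gives, locally, a pseudo-Riemannian product $M^{2n+1}=M_1^{\,n+1}\times M_2^{\,n}$ with $TM_1=T_1$ and $TM_2=T_2$. Then $R^M$ is block diagonal, so $R^{M_i}=R^M|_{T_i}$ and $R^M(U,V)W=0$ whenever $U,V\in T_2$ and $W\in T_1$. Flatness of $M_1$ is now immediate: a component $R^M(A,B)C$ with $A,B,C\in T_1$ either involves $\xi$, hence vanishes because $R(A,B)\xi=0$ and $R(\xi,A)B=0$, or has all three entries in $[+1]$, hence vanishes because $\nabla_{\varphi X_i}\varphi X_j=0$ and $[\varphi X_i,\varphi X_j]=0$. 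For $M_2$ I would covariantly differentiate \eqref{z6} in a direction $X_l\in[-1]$, using $\nabla_{X_i}X_j\in[-1]$, $\nabla_X\xi=-2\varphi X$ and \eqref{z6} once more, then antisymmetrise in $l$ and $i$ and invoke the Ricci identity $[\nabla_{X_l},\nabla_{X_i}]\varphi=R(X_l,X_i)\circ\varphi-\varphi\circ R(X_l,X_i)$ and $[X_l,X_i]=0$; the $\xi$-components cancel identically and one is left with
\[
R(X_l,X_i)\varphi X_j=4g(X_i,X_j)\varphi X_l-4g(X_l,X_j)\varphi X_i+\varphi\bigl(R(X_l,X_i)X_j\bigr).
\]
Since $X_l,X_i\in T_2$ and $\varphi X_j\in T_1$ the left-hand side vanishes, while $R(X_l,X_i)X_j\in[-1]$ and $\varphi$ is injective on $[-1]$, so
\[
R(X_l,X_i)X_j=-4\bigl(g(X_i,X_j)X_l-g(X_l,X_j)X_i\bigr),
\]
which says exactly that $M_2$ has constant sectional curvature $-4$.

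I expect the genuine difficulty to lie in this last computation: obtaining the covariant derivative of \eqref{z6} and running the Ricci identity so that the $\xi$-components cancel and the surviving coefficient comes out to be precisely $-4$ (and not $+4$, as in the analogous contact-metric computation). By comparison, the parallelism verifications of the second step, though several in number, are routine consequences of the structure equations once $\nabla_\xi\varphi=0$ and $\nabla_\xi h=0$ are in place.
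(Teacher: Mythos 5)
Your proposal is correct and follows essentially the same route as the paper: a local product splitting into integral manifolds of $[+1]\oplus[\xi]$ and $[-1]$, flatness of the first factor from \eqref{z2} and $R(X,Y)\xi=0$, and constant curvature $-4$ of the second factor by differentiating \eqref{z6} (your Ricci-identity computation is just the tensorial form of the paper's use of \eqref{z8}). The only difference is one of rigour: you explicitly verify $\nabla_\xi\varphi=0$, $\nabla_\xi h=0$ and the parallelism of both distributions before invoking de~Rham, whereas the paper passes directly from integrability to the local product, leaving that justification implicit.
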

\begin{proof}
We noted in $Theorem~\ref{p1}$ proof that $[X_i,X_j]=0$ so that
the distribution $[-1]$ is also integrable and hence we may take
$X_i=\frac{\partial}{\partial u^{n+i}}$. Moreover locally
$M^{2n+1}$ is the product of an integral submanifold $M^{n+1}$ of
$[+1]\oplus[\xi]$ and an integral submanifold $M^{n}$ of $[-1]$.
Since $\{\varphi X_i,\xi \}$ is a local basis of tangent vector
fields on $M^{n+1}$, equation $(\ref{z2})$ and $R(X,Y)\xi=0$ show
that $M^{n+1}$ is flat.

Now $\nabla_{\varphi X_i}X_j=0$, by equation $(\ref{z7})$
$g(\nabla_{\varphi X_i}X_j,X_k)=0$, by equation $(\ref{z2})$
$g(\nabla_{\varphi X_i}X_j,\varphi X_k)=0$, and $g(\nabla_{\varphi
X_i}X_j,\xi)=0$ which is trivial. Interchanging $i$ and $k$ in
equation $(\ref{z8})$ and subtracting we have
$$R(X_k,X_i,\varphi X_j,\varphi X_l)+R(X_k,X_i,X_j,X_l)=$$
$$=-4(g(X_i,X_j)g(X_k,X_l)-g(X_k,X_j)g(X_i,X_l)).$$
Using $\nabla_{\varphi X_i}X_j=0$ and $[\varphi X_i,\varphi
X_j]=0$ we see that $R(X_k,X_i,\varphi X_j,\varphi
X_l)=$\\$=R(\varphi X_j,\varphi X_l,X_k,X_i)=0$ and hence
$$R(X_k,X_i,X_j,X_l)=-4(g(X_i,X_j)g(X_k,X_l)-g(X_k,X_j)g(X_i,X_l))$$
completing proof.
\end{proof}

\section{Flat associated metrics on $\mathbb{R}^3_1$}\label{flat}

In dimension $3$ it is easy to construct flat paracontact
structures. For example, consider $\mathbb{R}^3_1$ with
coordinates $(x^1,x^2,x^3)$. The $1$-form
$\eta=\frac12(ch(x^3)dx^1+sh(x^3)dx^2)$ is a paracontact form. In
this case $\xi=2(ch(x^3)\frac{\partial}{\partial
x^1}-sh(x^3)\frac{\partial}{\partial x^2})$ and the metric $g$
whose components are $g_{11}=-g_{22}=g_{33}=\frac14$ gives flat
paracontact metric structure. Following the proof of the
$Theorem~\ref{p1}$, we see that $\frac{\partial}{\partial x^3}$
spans the distribution $[-1]$ and $sh(x^3)\frac{\partial}{\partial
x^1}+ch(x^3)\frac{\partial}{\partial x^2}$ spans the distribution
$[+1]$. Geometrically $\xi$ is parallel along $[+1]$ and rotates
(and hence the paracontact distribution $\mathbb{D}$ rotates) as
we move parallel to the $x^3$-axis.

We can now find a flat associated metric on $\mathbb{R}^3_1$ for
the standard paracontact form $\eta_0=\frac12(dz-ydx)$. Consider
the diffeomorphism $f: \mathbb{R}^3_1 \rightarrow \mathbb{R}^3_1$
given by
$$x^1=zch(x)-ysh(x)$$
$$x^2=zsh(x)-ych(x)$$
$$x^3=-x$$
Then $\eta_0=f^*\eta$ and the pseudo-Riemannian metric $g_0=f^*g$
is a flat associated metric for the paracontact form $\eta_0$.

\textbf{Acknowledgement} Simeon Zamkovoy acknowledges support from
the European Operational programm HRD through contract
BGO051PO001/07/3.3-02/53 with the Bulgarian Ministry of Education.
He also was partially supported by Contract 082/2009 with the
University of Sofia "St. Kl. Ohridski".

\bibliographystyle{hamsplain}

% \vspace{5mm}

% \noindent University of Sofia "St. Kl. Ohridski"   \\

% Faculty of Mathematics and Informatics,\\

% blvd. James Bourchier 5,\\

% 1164 Sofia, Bulgaria\\

\end{document}